\theoremstyle{remark}
\newtheorem*{examples}{\bf Examples}
\theoremstyle{plain}
\newtheorem*{theorem}{\bf Theorem}
\newtheorem{lemma}{\bf Lemma}
\def\C{{\mathbb C}}
\def\N{{\mathbb N}}
\def\D{{\mathbb D}}
\def\and{{\quad\text{and}\quad}}
\def\i{{\sqrt{-1}}}
\def\e{{\rm e}}
\def\ii{{\bf i}}
\def\eps{{\varepsilon}}
\def\frak{\mathfrak }
\def\o{{\rm o}}
\title{Quadratic polynomials, multipliers and equidistribution}
\email{xavier.buff$@$math.univ-toulouse.fr}\thanks{The research of the first author was supported by the IUF}
\address{ %
 Université Paul Sabatier\\
Institut de Math\'ematiques de Toulouse\\
 118, route de Narbonne \\  
 31062 Toulouse Cedex \\
  France }
\email{thomas.gauthier$@$u-picardie.fr}
\address{ %
 Université de Picardie Jules Verne\\
LAMFA\\
33 rue Saint-Leu\\
80039 Amiens Cedex 1\\
  France }
\begin{document}

\begin{abstract}
Given a sequence of complex numbers $\rho_n$, we study the asymptotic distribution of the sets of parameters $c\in \C$ such that the quadratic maps $z^2+c$ has a cycle of period $n$ and multiplier $\rho_n$. Assume $\frac{1}{n}\log |\rho_n|\to L$. If $L\leq \log 2$, they equidistribute on the boundary of the Mandelbrot set. If $L>\log2$ they equidistribute on the equipotential of the Mandelbrot set of level $2L-2\log2$. 
\end{abstract}

\maketitle

\section*{Introduction}
In this article, we study equidistribution questions in the parameters space of the family of quadratic polynomials 
\[f_c(z):=z^2+c, \ c\in\C.\] 
We denote by $K_c$ the filled-in Julia set of $f_c$ and by $J_c$ the Julia set:
\[K_c:=\left\{z\in \C~|~\bigl(f_c^{\circ n}(z)\bigr)_{n\in \N}\text{ is bounded}\right\}\quad\text{and}\quad 
J_c:=\partial K_c.\]
The Mandelbrot set $M$ is the set of parameters $c\in \C$ such that $0\in K_c$. 
\begin{figure}[htb]
\vskip.5cm
\centerline{
\scalebox{.5}{\includegraphics{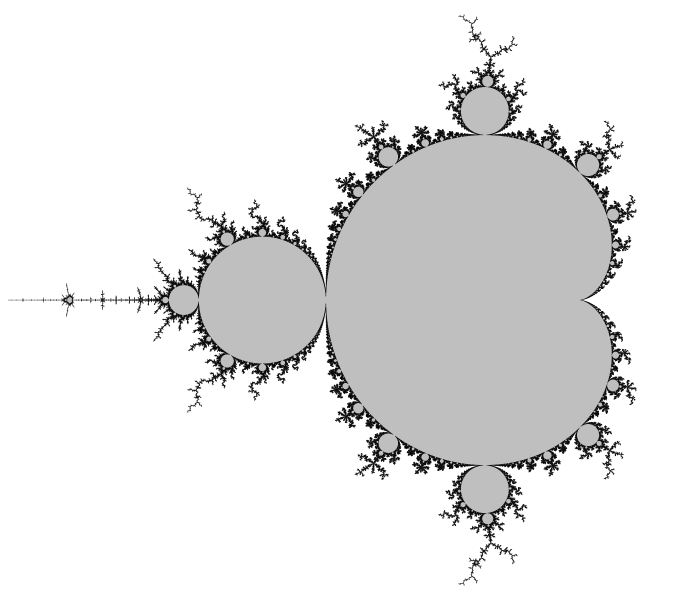}}
}
\caption{The Mandelbrot set $M$.\label{intro9}}
\end{figure}

The Green functions $g_c:\C\to [0,+\infty)$ and $g_M:\C\to [0,+\infty)$ are defined by 
\[g_c:=\lim_{n\to +\infty}\max\left(\frac{1}{2^n}  \log \bigl|f_c^{\circ n}(z)\bigr|,0\right)\quad\text{and}\quad g_M(c):=g_c(c).\]
The  bifurcation measure $\mu_{\rm bif}$ is defined by 
\[\mu_{\rm bif}:=\Delta g_M.\]
The support of the measure $\mu_{\rm bif}$ is the boundary of the Mandelbrot set $M$. 

~

Let $\rho$ be a complex number with $|\rho|\leq 1$. For $n\geq 1$, denote by $X_n$ the set of parameters $c\in \C$ such that the quadratic polynomial $f_c$ has a cycle of period $n$ with multiplier $\rho$. Let $\nu_n$ be the probability measure
\[\nu_n:=\frac{1}{{\rm card}(X_n)} \sum_{x\in X_n} \delta_x.\]
Bassanelli and Berteloot \cite{bbnagoya} proved that as $n\to +\infty$, the sequence of measures $\nu_n$ converges to the bifurcation measure $\mu_{\rm bif}$ (in fact, they prove a more general result, valid in families of polynomials of arbitrary degree). 
In this note, we prove that the result holds without the assumption that $|\rho|\leq 1$ and we study cases where the multiplier is not fixed. 

~

First, the set of parameters we will consider may fail to equidistribute on the boundary of the Mandelbrot set. They may equidistribute on an equipotential, i.e. a level curve of the function $g_M$. For $\eta\geq 0$, we let $\mu_\eta$ be the probability measure defined by 
\[\mu_\eta:= \Delta \max(g_M,\eta).\]
For $\eta>0$, the support of $\mu_\eta$ is exactly the equipotential $\{c\in \C~|~g_M(c)=\eta\}$. 

~

Second, we are interested in parameters $c\in \C$ for which $f_c$ has a cycle of period $n$ with multiplier $\rho$. When $|\rho|>1$, we may have to count such parameters $c$ with multiplicities. We proceed as follows. 
If $f_c$ has no parabolic cycle, we set: 
\[R_n(c,\rho):=\prod_{C\text{ cycle of }f_c\text{ of period }n} \bigl(\rho-\rho(C)\bigr)\]
where $\rho(C)$ is the multiplier of $C$ as a cycle of $f_c$. 
According to Bassanelli and Berteloot \cite{bbnagoya}, this defines a polynomial $R_n\in \C[c,\rho]$ which is therefore defined even when $f_c$ has parabolic cycles.

\begin{examples}
The fixed point of $f_c$ are the roots of $z^2-z +c$ and the multiplier of $f_c$ at a fixed point $z$ is $\rho=2z$. 
The resultant of $z^2-2z+c$ and $\rho-2z$ as polynomials in $z$ is $\rho^2-2\rho+4c$: 
\[R_1(c,\rho)= \rho^2-2\rho+4c.\]
The periodic points of period $2$ are the roots of  $z^2+z+c+1$ and the multiplier at a periodic point z is $\rho=4z(z^2+c)$. 
The resultant of $z^2+z+c+1$ and $\rho-4z(z^2+c)$ as polynomials in $z$ is $(\rho+4c+4)^2$:
\[R_2(c,\rho) = \rho+4+4c.\]
Similarly, one computes
\[R_3(c,\rho)=\rho^2-16\rho+64-8\rho c+64c+128c^2+64c^3.\]
\end{examples}

The degree $d_n$ of $R_n(c,\rho)$ as a polynomial of the variable $c$ does not depend on $\rho$.
It may be defined recursively by 
\[d_1=1\quad\text{and}\quad d_n= 2^{n-1}-\sum_{m \text{ divides } n \atop m\neq n} d_m.\]
As $n\to +\infty$ we have $d_n\sim 2^{n-1}$. 
For $n\geq 1$ and $\rho\in \C$, we set 
\[u_{n,\rho}:= \frac{1}{d_n} \log \bigl|R_n(c,\rho)\bigr|\quad\text{and}\quad \nu_{n,\rho}:=\Delta u_{n,\rho}.\]
The measure $\nu_{n,\rho}$ is a probability measure and its support is the set of parameters $c$ such that $f_c$ has a cycle of period $n$ and multiplier $\rho$ (except when $\rho=1$, in which case the support also contains parameters of period $k$ dividing $n$ whose multiplier is a $n/k$-th root of unity). 

~

Our result is the following. 
\begin{theorem}
Let $(\rho_n)_{n\geq 1}$ be a sequence of complex numbers such that 
\[\lim_{n\to +\infty} \frac{1}{n}\log |\rho_n| =L\in [-\infty,+\infty).\]
As $n\to+\infty$, the sequence $(\nu_{n,\rho_n})_{n\geq1}$ converges to $\mu_\eta$ with $\eta:=\max(0,2L-2\log 2)$. 
\end{theorem}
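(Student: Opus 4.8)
The plan is potential-theoretic. Write $u_n:=u_{n,\rho_n}=\frac1{d_n}\log|R_n(\cdot\,,\rho_n)|$, so that $\nu_{n,\rho_n}=\Delta u_n$, and prove that $u_n\to u_\infty$ in $L^1_{\rm loc}(\C)$, where
\[u_\infty:=\max\bigl(2L,\ 2\log2+g_M\bigr)\]
(interpreted as $u_\infty=2\log2+g_M$ when $L=-\infty$). Since $\max(2L,2\log2+g_M)=2\log2+\max(2L-2\log2,g_M)$ and $g_M\geq 0$ everywhere, one has $\Delta u_\infty=\Delta\max(2L-2\log2,g_M)=\Delta\max(\eta,g_M)=\mu_\eta$ with $\eta=\max(0,2L-2\log2)$; applying $\Delta$ to the $L^1_{\rm loc}$ limit then yields the theorem.

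\noindent\textbf{Compactness.} From $R_n(c,\rho)=\prod_C(\rho-\rho(C))$, the product over the $N_n$ cycles $C$ of period $n$ (with $nN_n=2d_n$, so $N_n\sim 2^n/n$) and $|\rho(C)|=2^n\prod_{z\in C}|z|$, together with $|\rho-\rho(C)|\leq|\rho|+|\rho(C)|$ and the fact that for $c$ in a fixed compact set all periodic points of $f_c$ lie in a fixed disc, one sees that $(u_n)$ is locally uniformly bounded above. Hence $(u_n)$ is relatively compact in $L^1_{\rm loc}(\C)$ and every cluster value $v$ is subharmonic; it remains to show $v=u_\infty$ for every such $v$.

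\noindent\textbf{The asymptotics (the heart).} For each fixed $c$ the cycles of period $n$ equidistribute, as $n\to+\infty$, toward the measure of maximal entropy $\mu_c$ of $f_c$ (Brolin, Lyubich, Freire--Lopes--Ma\~n\'e), so that for all but $\o(N_n)$ of them
\[\frac1n\log|\rho(C)|=\log2+\frac1n\sum_{z\in C}\log|z|\ \longrightarrow\ \log2+\int\log|z|\,d\mu_c(z)=\log2+\tfrac12g_M(c),\]
the last equality because the logarithmic potential of $\mu_c$ is $g_c$ (both being subharmonic with Laplacian $\mu_c$ and behaving like $\log|z|$ at $\infty$) and $g_c(0)=\tfrac12g_c(c)=\tfrac12g_M(c)$. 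Writing $\log|\rho_n-\rho(C)|=n\max\bigl(\tfrac1n\log|\rho_n|,\tfrac1n\log|\rho(C)|\bigr)+O(1)$, summing over $C$, and using $nN_n/d_n=2$ together with the concentration of the rates $\tfrac1n\log|\rho(C)|$ at $\log2+\tfrac12g_M(c)$, one is led to $u_n(c)\to 2\max\bigl(L,\log2+\tfrac12g_M(c)\bigr)=u_\infty(c)$. Turning this into a genuine, sufficiently uniform \emph{upper} bound is the crux: one needs that the number of period-$n$ cycles with $\tfrac1n\log|\rho(C)|>\log2+\tfrac12g_M(c)+\varepsilon$ is $\leq 2^n\e^{-c(\varepsilon)n}$ and contributes negligibly to $u_n$. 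As $\log|z|$ is bounded above on the region carrying the periodic points, this upper tail is controlled by the topological pressure $s\mapsto P\bigl(s\log|z|\bigr)$, which is convex with $P(0)=\log2$ and right derivative $\int\log|z|\,d\mu_c=\tfrac12g_M(c)$ at $s=0$ (uniqueness of the measure of maximal entropy), hence $P(s\log|z|)=\log2+\tfrac s2g_M(c)+\o(s)$; a Chebyshev estimate against the periodic-point formula for $P$ then gives the bound. I expect this uniform-in-$c$ deviation control — and its analogue near $\partial M$, where $f_c$ is not uniformly hyperbolic and $\log|z|$ may be unbounded on $J_c$ — to be the main obstacle. Granting it, one gets $(\limsup_n u_n)^*\leq u_\infty$ on $\C$, and on $\C\setminus M$, where $f_c$ expands uniformly on the Cantor set $J_c$ and all estimates are quantitative, the locally uniform convergence $u_n\to u_\infty$; so every cluster value satisfies $v\leq u_\infty$ on $\C$ and $v=u_\infty$ on $\C\setminus M$.

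\noindent\textbf{Conclusion.} Near $\infty$, $v=u_\infty=2\log2+g_M\sim\log|c|$, so $\Delta v$ is a probability measure. If $L>\log2$, then $\eta>0$ and $\mu_\eta=\Delta u_\infty$ is carried by the equipotential $\{g_M=\eta\}\subset\C\setminus M$ where $v=u_\infty$; thus $\Delta v$ restricted to $\C\setminus M$ equals $\mu_\eta$, already of mass $1$, which forces $\Delta v=\mu_\eta$ and hence $v=u_\infty$ by Liouville's theorem. If $L\leq\log2$, then $\eta=0$ and $\mu_0=\mu_{\rm bif}$ is carried by $\partial M$, so the previous argument gives nothing on $M^\circ$; here I would compare $u_n$ with
\[u_{n,0}=\frac1{d_n}\log|R_n(\cdot\,,0)|=\frac1{d_n}\Bigl(nN_n\log2+\sum_{m\mid n}\mu(n/m)\log|f_c^{\circ m}(0)|\Bigr),\]
which tends in $L^1_{\rm loc}$ to $2\log2+g_M$ — the classical equidistribution of the monic polynomials $c\mapsto f_c^{\circ m}(0)$, i.e. $\tfrac1{2^m}\log|f_c^{\circ m}(0)|\to g_c(0)$ — and then show $u_n-u_{n,0}=\frac1{d_n}\sum_C\log\bigl|1-\rho_n/\rho(C)\bigr|\to 0$ in $L^1_{\rm loc}$. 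Since $L\leq\log2\leq\log2+\tfrac12g_M(c)$, cycles with $|\rho(C)|\lesssim|\rho_n|$ are atypical by the (now two-sided) concentration of the multiplier rates; combining the deviation bound with the local uniform hyperbolicity of $f_c$ off $\partial M$, which keeps $J_c$ away from $0$ and forbids cycles of small multiplier, controls this difference from above, the lower bound being supplied by potential theory, $\Delta u_n$ and $\Delta u_{n,0}$ being probability measures.
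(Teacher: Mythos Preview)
Your overall potential-theoretic scheme matches the paper's: pass to $L^1_{\rm loc}$ limits of $u_n$, identify the limit with $u_\infty=\max(2L,\,2\log2+g_M)$, take $\Delta$. But the two technical steps are handled very differently, and in your version the central one is left open.

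\textbf{The gap.} Your ``heart'' hinges on a large-deviation estimate: for each $\varepsilon>0$, at most $2^n\e^{-c(\varepsilon)n}$ period-$n$ cycles satisfy $\tfrac1n\log|\rho(C)|>\log2+\tfrac12g_M(c)+\varepsilon$. You sketch this via the pressure function and Chebyshev, but you explicitly flag it as ``the main obstacle'' and proceed by ``granting it''. The concern is real: such bounds are standard when $f_c$ is uniformly hyperbolic on $J_c$, but uniformity in $c$ as $c\to\partial M$ is delicate, and for $c\in M$ with $0\in J_c$ the potential $\log|z|$ is unbounded below. Your treatment of the case $L\leq\log2$ then compounds the problem: the comparison $u_n-u_{n,0}\to0$ needs a \emph{two-sided} deviation bound (controlling cycles with abnormally small multipliers as well), again only sketched and again uniform in $c$ across $M$.

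\textbf{How the paper avoids all of this.} For the pointwise limit on $\{g_M>\eta\}$ the paper uses the holomorphic motion of the Cantor Julia sets over a double cover $\lambda\mapsto{\frak c}(\lambda)$ of $\C\setminus M$: each period-$n$ multiplier factors as $\rho_\ii=\eps_\ii\,\sigma_\ii^n$ with $\sigma_\ii:\C\setminus\overline\D\to\C\setminus\overline\D$ holomorphic, and the family $(\sigma_\ii)$ is \emph{normal} with every limit tangent to $2\lambda$ at $\infty$ and commuting with all rotations, hence equal to $\lambda\mapsto2\lambda$. This forces $\log|\sigma_\ii(\lambda)|\to\log|2\lambda|$ uniformly over \emph{all} itineraries, so no deviation estimate is needed at all. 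To extend the identification of the limit across $\{g_M\leq\eta\}$ the paper again argues indirectly: when $\eta>0$ it computes $u_n(0)$ explicitly from the cycles of $z\mapsto z^2$, obtains $u(0)\geq2L$, and applies the maximum principle on $\{g_M\leq\eta\}$; when $\eta=0$ it invokes a subharmonic comparison lemma (any subharmonic function equal to $g_M+2\log2$ on $\C\setminus M$ equals it everywhere), whose proof rests on the fact that $\mu_{\rm bif}$ does not charge the boundaries of interior components of $M$ (Zakeri). Your mass argument for $L>\log2$ is a valid alternative to the $c=0$ computation, but for $L\leq\log2$ you would do much better to drop the $u_{n,0}$ comparison and use this comparison lemma instead.
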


The note is organised as follows. In Section 1, we establish the Theorem relying on two Lemmas. The first one, which is proved in Section 2, is concerned with the asymptotic behavior of the sequence $(u_{n,\rho_n})$ outside of the compact set $\{c\in\C \ | \	 g_M(c)\leq\eta\}$. The second one is a direct application of a general comparison lemma for subharmonic functions on $\C$ which we establish in Section 3.

\section{Strategy of the proof of the Theorem}

Let us set 
\[v:=g_M+2\log 2:\C\to [0,+\infty).\]
Let $(\rho_n)_{n\geq 1}$ be a sequence of complex numbers such that 
\[\lim_{n\to +\infty} \frac{1}{n}\log |\rho_n| =L\in [-\infty,+\infty)\]
and set 
\[\eta:=\max(0,2L-2\log 2).\]
Our proof is based on the following lemmas. 

\begin{lemma}\label{lemma:limitu}
If $c\in\C-M$ satsifies $g_M(c)>\eta$, then 
\[\lim_{n\to +\infty} u_{n,\rho_n}(c) = v(c).\]
\end{lemma}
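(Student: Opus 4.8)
The plan is to get a hold on $R_n(c,\rho_n)$ by factoring it over the periodic points and using the structure of $f_c$ when $c\notin M$. First I would recall that for $c\notin M$, the Julia set $J_c$ is a Cantor set, $f_c$ is hyperbolic with no parabolic cycles, and every cycle of $f_c$ is repelling. In this regime all periodic points of period exactly $n$ are simple roots of $f_c^{\circ n}(z)-z$, and the set of such points forms $d_n\cdot n$-many points (up to the standard recursion subtracting lower periods); crucially, $0$ escapes, so each periodic point $w$ satisfies $w=f_c^{\circ n}(w)$ and $w$ lies deep in the escaping region. The quantity to understand is
\[
u_{n,\rho_n}(c)=\frac{1}{d_n}\log\bigl|R_n(c,\rho_n)\bigr|=\frac{1}{d_n}\sum_{C}\log\bigl|\rho_n-\rho(C)\bigr|,
\]
the sum over cycles $C$ of exact period $n$.

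Next I would estimate the multipliers $\rho(C)$. For a periodic point $w$ of period $n$, $\rho(C)=(f_c^{\circ n})'(w)=\prod_{j=0}^{n-1}2w_j$ where $w_j=f_c^{\circ j}(w)$. Taking logarithms, $\frac{1}{n}\log|\rho(C)|=\log 2+\frac{1}{n}\sum_{j=0}^{n-1}\log|w_j|$. The orbit $w_0,\dots,w_{n-1}$ is a finite piece of a bounded (in fact periodic) orbit contained in the escaping set, and the key point is that $\frac{1}{n}\sum_j \log|w_j|$ is controlled by the Green function: since $g_c(z)=\log|z|+o(1)$ as $|z|\to\infty$ and $g_c(f_c(z))=2g_c(z)$, a telescoping/averaging argument along the orbit shows $\frac{1}{n}\sum_{j=0}^{n-1}\log|w_j|$ tends (uniformly over all period-$n$ cycles, once $c$ is fixed off $M$) to $g_c(c)=g_M(c)$. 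Hence
\[
\frac{1}{n}\log|\rho(C)|\longrightarrow \log 2+g_M(c)=v(c)-\log 2,
\]
uniformly in $C$. Now compare with $\frac1n\log|\rho_n|\to L$. The hypothesis $g_M(c)>\eta=\max(0,2L-2\log2)$ gives $g_M(c)>2L-2\log 2$, i.e. $L<\tfrac12 g_M(c)+\log 2<g_M(c)+\log 2=v(c)-\log 2$ (using $g_M(c)>0$), so for $n$ large $|\rho_n|$ is exponentially smaller than $|\rho(C)|$ for every period-$n$ cycle; therefore $\log|\rho_n-\rho(C)|=\log|\rho(C)|+o(n)$ uniformly in $C$. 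Summing over the (roughly $2^{n-1}$) cycles and dividing by $d_n\sim 2^{n-1}$,
\[
u_{n,\rho_n}(c)=\frac{1}{d_n}\sum_C\log|\rho(C)|+o(1)=\frac{1}{d_n}\log\bigl|\mathrm{disc}\bigr|+\dots
\]
and the leading term should be recognized: $\prod_C\rho(C)$ up to sign is essentially the resultant of $f_c^{\circ n}(z)-z$ with its derivative, and a classical computation (cf. the examples $R_1,R_2$) identifies $\frac1{d_n}\log\bigl|\prod_C\rho(C)\bigr|\to v(c)=g_M(c)+2\log 2$. Putting these together yields $u_{n,\rho_n}(c)\to v(c)$.

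The main obstacle I expect is making the convergence $\frac1n\sum_{j}\log|w_j|\to g_M(c)$ genuinely uniform over all period-$n$ cycles and simultaneously controlling the count: I need that all but an $o(2^n)$ proportion of periodic points have orbits that spend their time in the region where $\log|z|$ is close to $g_c$, and that no cycle has $|\rho(C)|$ anomalously close to $|\rho_n|$. The clean way around this is to avoid cycle-by-cycle estimates and instead work directly with the polynomial $R_n(c,\rho)$: write $R_n(c,\rho_n)$ as a resultant in $z$, normalize, and use that $c\mapsto \frac1{d_n}\log|R_n(c,\rho_n)|$ is the logarithmic potential of $\nu_{n,\rho_n}$, whose leading behaviour as $|c|\to\infty$ and off $M$ is dictated by $g_M$; comparing potentials and using $\frac1n\log|\rho_n|\to L$ together with $g_M(c)>\eta$ to control the $\rho_n$-dependent correction term. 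I would expect to borrow the needed resultant asymptotics and the fact that cycles of $f_c$ are governed by $g_c$ from Bassanelli–Berteloot \cite{bbnagoya}, so that the remaining work is the elementary inequality chasing showing the $\rho_n$ term does not interfere on $\{g_M>\eta\}$.
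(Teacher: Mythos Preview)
Your proposal contains a genuine error in the multiplier asymptotics, stemming from a misconception about where periodic points live. You write that since $0$ escapes, ``each periodic point $w$ \ldots\ lies deep in the escaping region,'' and then invoke $g_c(z)=\log|z|+o(1)$ near infinity. This is backwards: for $c\notin M$ the filled Julia set is $K_c=J_c$, a compact Cantor set, and every periodic point lies in $J_c$, where $g_c\equiv 0$. The orbit $w_0,\dots,w_{n-1}$ stays in this bounded set for all $n$, so the behaviour of $g_c$ at infinity is irrelevant and no telescoping against $g_c$ is available.

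Consequently your claimed limit is wrong: one has $\frac{1}{n}\log|\rho(C)|\to \log 2+\tfrac{1}{2}g_M(c)$, not $\log 2+g_M(c)$. One way to see the value (for typical cycles) is via the Lyapunov exponent of the equilibrium measure, $\chi(f_c)=\log 2+g_c(0)=\log 2+\tfrac{1}{2}g_M(c)$, using $g_c(c)=2g_c(0)$. With the correct value, the inequality you derive, $L<\tfrac{1}{2}g_M(c)+\log 2$, is exactly what is needed. There is also a counting slip: the number of cycles of exact period $n$ is $k_n/n\sim 2^n/n$, not $\sim 2^{n-1}$; the factor $k_n/d_n\to 2$ is what converts the per-cycle limit $\tfrac{1}{2}g_M(c)+\log 2$ into $v(c)=g_M(c)+2\log 2$.

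You correctly flag the real difficulty: uniformity of $\frac{1}{n}\log|\rho(C)|$ over \emph{all} period-$n$ cycles (a single long cycle need not equidistribute). The paper resolves this not by ergodic theory but by complex analysis. It passes to the double cover $\lambda\mapsto\phi_M^{-1}(\lambda^2)$ of $\C-M$ by $\C-\overline\D$, follows the Cantor Julia set by its holomorphic motion, and writes each multiplier as $\rho_{\ii}(\lambda)=\pm\,\sigma_{\ii}(\lambda)^n$ with $\sigma_{\ii}:\C-\overline\D\to\C-\overline\D$ holomorphic, tangent to $2\lambda$ at infinity, and commuting with all $n$-th roots of unity. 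These maps omit $\overline\D$, hence form a normal family, and any limit must equal $\lambda\mapsto 2\lambda$; this yields $\log|\sigma_{\ii}(\lambda)|\to\log|2\lambda|=\tfrac{1}{2}g_M(c)+\log 2$ uniformly over all periodic itineraries, which is precisely the uniformity your Green-function heuristic cannot supply.
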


\begin{lemma}\label{lemma:comparisonM}
Any subharmonic function $u:\C\to [-\infty,+\infty)$ which coincides with $v$ outside $M$ coincides with $v$ everywhere. 
\end{lemma}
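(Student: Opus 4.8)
The statement is a rigidity assertion: the function $v=g_M+2\log 2$ is so constrained on $M$ by its values outside $M$ that no subharmonic competitor can differ from it there. The plan is to show that such a $u$ is both $\le v$ and $\ge v$ on all of $\C$. First I would observe that $v$ is subharmonic on $\C$ (since $g_M\ge 0$ is subharmonic and continuous, being the Green function of $M$ with pole at infinity), harmonic on $\C-M$, and that $v=2\log 2$ on $M$ while $v(c)=g_M(c)+2\log 2=\log|c|+\o(1)$ as $c\to\infty$. Now take $u$ subharmonic on $\C$ with $u=v$ on $\C-M$.

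For the inequality $u\le v$: consider $w:=u-v$ on a large disk $\D_R$. On $\D_R-M$ we have $w\equiv 0$, and $w$ is subharmonic on $\D_R$ (difference of a subharmonic and a harmonic-where-it-matters function needs a small argument: $v$ is only subharmonic, not harmonic, on the interior of $M$, so $u-v$ need not be subharmonic there). The cleaner route is the maximum principle applied directly: $u$ is subharmonic on the interior $\mathring M$ of $M$, continuous up to $\partial M$ with boundary values $u=v=2\log 2$, hence by the maximum principle $u\le 2\log 2$ on $\mathring M$; but $v\ge 2\log2$ everywhere, so $u\le v$ on $M$, and $u\le v$ holds trivially on $\C-M$. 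Thus $u\le v$ on $\C$.

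For the reverse inequality $u\ge v$: here I would use that $v$ is subharmonic on $\C$ and that $u$ is subharmonic with $u=v$ off $M$; the function $\max(u,v)$ is subharmonic, equals $v$ off $M$, and I want to deduce it equals $u$. Alternatively, and more robustly, I would invoke the sub-mean-value inequality for $u$ together with the fact that $\mathring M$ is (pluri)potential-theoretically small as seen from inside: for $c\in\mathring M$ and $r$ large enough that $\D(c,r)\supset M$, we have
\[
u(c)\ \le\ \frac{1}{2\pi}\int_0^{2\pi} u(c+re^{i\theta})\,d\theta\ =\ \frac{1}{2\pi}\int_0^{2\pi} v(c+re^{i\theta})\,d\theta,
\]
but this gives an upper bound, not a lower bound, so this is not quite it either. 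The honest reverse bound should come from the fact that $v$ is the \emph{smallest} subharmonic function with the prescribed behavior: I would argue that $v=\max(g_M,0)+2\log2$ and that $g_M$ is characterized as $\sup$ of subharmonic functions $\le 0$ on $M$ with logarithmic growth — so any subharmonic $u$ agreeing with $v$ off $M$, when we set $u-2\log 2$, is a subharmonic function that is $\le g_M$ off $M$ hence (being subharmonic on $\C$ and $\le \log^+|c|+C$ at infinity, with $u-2\log2\le 0$ a priori on $M$ from the previous paragraph... no) — the subtlety is getting $u-2\log2\ge g_M$, i.e. that $u-2\log 2$ lies \emph{above} the extremal function, which forces using that $u$ is subharmonic across $\partial M$ and equals $g_M+2\log2$ on the outside where the latter is harmonic.

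\textbf{Main obstacle.} The real content — and what I expect the authors' Section~3 comparison lemma to supply — is the lower bound $u\ge v$ on $\mathring M$. The upper bound is an elementary maximum principle argument as above. The lower bound cannot follow from the maximum/mean-value principle applied to $u$ alone, since those only propagate upper bounds inward; it must exploit that $v$ itself is subharmonic on $\C$ and that $u-v$, though not obviously subharmonic on $\mathring M$, satisfies $u-v\ge $ (something controlled) because $u$ is subharmonic and $v$ is harmonic on a neighborhood of $\partial M$ from outside. Concretely I would prove a general statement: if $u,v$ are subharmonic on $\C$, $u=v$ on $\C-M$ for a compact $M$, and $v$ is \emph{harmonic} on $\C-M$, then the Riesz measure $\Delta u$ and $\Delta v$ agree off $\mathring M$, both give full mass to... — and then a mass/growth comparison at infinity (both $u$ and $v$ grow like $\log|c|$, so $\Delta u$ and $\Delta v$ are both probability measures) forces $\Delta u=\Delta v$ on $\C$, whence $u-v$ is harmonic on $\C$, bounded (it is $0$ off $M$, hence bounded on $M$ by the max principle), thus constant, thus $0$. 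That last argument — matching total Riesz mass via the growth at infinity and concluding $u-v$ is a bounded entire harmonic function — is, I believe, the crux, and it is exactly the kind of thing the general comparison lemma of Section~3 is designed to handle.
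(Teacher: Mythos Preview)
Your plan correctly identifies the two--step structure ($u\le v$ and $u\ge v$) and is right that the second inequality is where the content lies. Your argument for $u\le v$ is essentially correct and in fact more direct than the paper's $w_1$--construction: since $g_M=0$ on $M$, the function $v$ is the constant $2\log 2$ on $M$, so once one checks $u=v$ on $\partial M$ (via non--thinness of $\C-M$ at each boundary point, so that $u(\zeta)=\limsup_{z\to\zeta,\,z\in\C-M}u(z)=v(\zeta)$), the ordinary maximum principle on each component $U$ of $\mathring M$ gives $u\le 2\log 2=v$ there.

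The gap is in your argument for $u\ge v$. The general statement you propose---that $u=v$ on $\C-M$ together with equality of total Riesz mass forces $\Delta u=\Delta v$ on all of $\C$---is \emph{false} without a further hypothesis. Take $M=\overline\D$, $v(z)=\log^+|z|$ and $u(z)=\log|z|$: both are subharmonic on $\C$, they coincide on $\{|z|>1\}$, and both Riesz measures have the same total mass, yet $\Delta u=2\pi\,\delta_0$ while $\Delta v$ is arclength on $\partial\D$. Knowing $\Delta u=\Delta v$ on the open set $\C-M$ and equality of total masses yields only $\Delta u(M)=\Delta v(M)$, not that the two measures agree on $M$; your claim that they ``agree off $\mathring M$'' is precisely what fails on $\partial M$.

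The paper's general comparison lemma (Lemma~\ref{lemma:comparisonK}) isolates the missing hypothesis: one must assume that $\Delta v$ does not charge the boundary $\partial U$ of any connected component $U$ of $\mathring K$. Under that assumption, gluing $u$ on $U$ with $v$ on $\C-U$ produces a global subharmonic function $w_2$ whose Laplacian agrees with $\Delta v$ on $\C-\overline U$ and has the same total mass; since now $\Delta v(\overline U)=0$, one concludes $\Delta w_2(\overline U)=0$ as well, hence $\Delta w_2=\Delta v$, so $w_2-v$ is entire harmonic and vanishes off $U$, hence everywhere. For $K=M$ and $\Delta v=\mu_{\rm bif}$, verifying that the harmonic measure of $\C-M$ does not charge the boundaries of hyperbolic or queer components is a genuine dynamical fact: the paper invokes Zakeri's result on biaccessible parameters to obtain it. That input is what your outline is missing, and without it the mass--matching step does not close.
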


The proof is then completed as follows.  Extracting a subsequence if necessary, we may assume that the sequence of probability measures $\nu_{n_k,\rho_{n_k}}$ converges to some limit $\nu$. According to Lemma \ref{lemma:limitu}, the sequence $u_{n_k,\rho_{n_k}}$ then converges in $L^1_{\rm loc}$ to a limit $u$ which satisfies
\[\Delta u = \nu \quad\text{and}\quad u(c)=v(c)\text{ if }g_M(c)>\eta.\]
In addition, for every $c\in \C$, we have
\[u(c)\geq \limsup_{n\to +\infty} u_{n_k,\rho_{n_k}}(c)\]
with equality outside a polar set. According to Lemma \ref{lemma:limitu}, the subharmonic functions $u$ and $v$ coincide in the region $\bigl\{c\in \C~|~g_M(c)>\eta\bigr\}$. If $L\leq \log 2$, i.e. if $\eta=0$, Lemma \ref{lemma:comparisonM} implies that $u=v$ and so
\[\nu =\Delta u = \Delta (g_M+2\log 2) = \Delta g_M = \mu_{\rm bif}.\]

So, let us consider the case $L>\log 2$, or equivalently $\eta>0$. Note that  for $n\geq 2$ the cycles of $f_0(z)= z^2$ of period $n$ all have multiplier $2^n$. There are $k_n/n$ such points, where $k_n$ is the number of periodic points of period $n$ which may be defined recursively by 
\[k_1:=1\quad\text{and}\quad k_n = 2^n - \sum_{m\text{ divides }n\atop m\neq n} k_m.\]
As $n\to +\infty$, we have $k_n\sim 2^n$. Since $L>\log 2$, $2^n=\o(\rho_n)$ and so: 
\[u_{n,\rho_n}(0) = \frac{k_n/n}{d_n} \log|2^n-\rho_n| \sim \frac{2^n/n}{2^{n-1}} \log |\rho_n|\to 2L.\]
As a consequence, $u(0)\geq 2L$. According to  \cite[Theorem 3.8.3]{ransford},
\[\limsup_{g_M(c)\to \eta\atop g_M(c)<\eta} u(c) = \max_{g_M(c)=\eta} u(c) = \limsup_{g_M(c)\to \eta\atop g_M(c)>\eta} u(c) = 2L.\]
The Maximum Principle
for subharmonic functions implies that $u(c)=2L$ as soon as $g_M(c)\leq \eta$. 
So, 
\[u=\max(g_M+2\log 2,2L) = \max(g_M,\eta)+2\log 2\quad\text{and}\quad \nu = \mu_\eta.\]

\section{Outside the Mandelbrot set}

We now study the behaviour of the multipliers of $f_c$ when $c$ is not in $M$. Our aim in the present section is to prove Lemma \ref{lemma:limitu}.

First, when $c\in \C-M$, the Julia set $J_c$ of $f_c$ is a Cantor set and $f_c:J_c\to J_c$ is conjugate to the shift on $2$ symbols. 
In addition, when $c$ varies outside $M$, the Julia set moves locally holomorphically. However, it is not quite true that the holomorphic motion is parameterized by $\C-M$: if we start with $c\in (1/4,+\infty)$ and follow the Julia set as $c$ turns around the Mandelbrot set, every point comes back to its complex conjugate. After $2$ turns, the points come back to their initial location. See  \cite{bdk} for details. To avoid the monodromy problems, it is more convenient to pass to a cover of degree $2$ of $\C-M$. 

~

Let $\phi_M:\C-M\to \C-\overline \D$ be the conformal representation which sends $(1/4,+\infty)$ to $(1,+\infty)$ and for $\lambda\in \C-\overline \D$, set 
\[{\mathfrak c}(\lambda):= \phi_M^{-1}(\lambda^2).\]
Then, the holomorphic map ${\frak c}:\C-\overline\D\to \C-M$ is a covering map of degree $2$. 

Set $I:=\{0,1\}^\N$ and let $\sigma:I\to I$ be the shift. A point $\ii:=(i_0,i_1,i_2,\ldots )\in I$ is called an itinerary. There is a map $\psi:(\C-\overline \D)\times I\to \C$ such that 
\begin{itemize}
\item for all $\lambda\in \C-\D$, the map $\psi_\lambda:\ii\mapsto \psi(\lambda,\ii)$ is a bijection between $I$ and $J_{{\frak c}(\lambda)}$ conjugating $\sigma$ to $f_{{\frak c}(\lambda)}$: 
\[\psi_\lambda\circ \sigma = f_{{\frak c}(\lambda)}\circ \psi_\lambda\]
\item for all $\ii\in I$, the map $\psi_\ii:\lambda\mapsto \psi(\lambda,\ii)$ is holomorphic.
\end{itemize}
We may choose the map $\psi$ so that for $\lambda\in (1/4,+\infty)$, the map $\psi_\lambda$ sends itineraries for which $i_0=0$ in the upper half-plane and itineraries for which $i_0=1$ in the lower half-plane. Then, 
\[\psi_\ii(\lambda)\underset{\lambda\to +\infty}\sim \i\cdot \lambda\text{ if }i_0=0\quad\text{and}\quad
\psi_\ii(\lambda)\underset{\lambda\to +\infty}\sim -\i\cdot \lambda\text{ if }i_0=1.\]

Next, if $\ii$ is periodic of period $n$ for $\sigma$, let $\rho_\ii(\lambda)$ be the multiplier of $\psi_\ii(\lambda)$ as a fixed point of $f_{{\frak c}(\lambda)}^{\circ n}$. Note that 
\[\rho_\ii(\lambda):=\prod_{k=0}^{n-1} 2\cdot \psi_{\sigma^{\circ k}(\ii)}(\lambda)\underset{\lambda\to +\infty}\sim \eps_\ii\cdot (2\lambda)^n\quad \text{with}\quad \eps_\ii=\pm \cdot(\i)^n.\]
Since $\rho_\ii$ has local degree $n$ at $\infty$, we may write $\rho_\ii = \eps_\ii\cdot \sigma_\ii^n$ for some holomorphic map $\sigma_\ii:\C- \overline \D\to \C- \overline \D$ which satisfies
\[\sigma_\ii(\lambda)\underset{\lambda\to +\infty}\sim 2\lambda\quad\text{and}\quad 
\sigma_\ii(\omega \lambda) = \omega \cdot \sigma_\ii(\lambda)\text{ if }\omega^n=1.\]
The maps $\sigma_\ii$ take their values in $\C-\overline \D$ and so, form a normal family. 

Let $(\ii_n)$ be a sequence of periodic itineraries of period $n$. Let $\sigma: \C- \overline \D\to \C- \overline \D$ be a limit value of the sequence $(\sigma_{\ii_n}: \C- \overline \D\to \C- \overline \D)$. 
Then, $\sigma$ is tangent to $\lambda\mapsto 2\lambda$ at infinity and commutes with rotations. Thus, $\sigma(\lambda)= 2\lambda$. 
Now, if $c\in \C-M$, then $c={\frak c}(\lambda)$ with $\log |\lambda| = \frac{1}{2}g_M(c)$. As $n\to +\infty$, 
\[\log \bigl|\sigma_{\ii_n}(\lambda)\bigr|-\frac{1}{n}\log |\rho_n|\underset{n\to +\infty}\longrightarrow \log |2\lambda|-L=\frac{g_M(c)-\eta}{2}.\]
So, if $g_M(c)>\eta$, then there is an $\eps>0$ such that for any periodic itinerary $\ii$ of high enough period $n$, we have 
\[\left|\frac{\rho_n}{\rho_\ii(\lambda)} \right|= \left|\frac{\rho_n}{\sigma_\ii(\lambda)^n}\right|\leq \e^{-n\eps} .\]
In that case 
\begin{eqnarray*}
u_{n,\rho_n}(c)  &=& \frac{1}{d_n}\sum_{\ii\text{ periodic itinerary}\atop\text{ of period }n} \frac{1}{n}\log \bigl|\rho_\ii(\lambda)-\rho_n\bigr|\\
&=& \frac{1}{d_n}\sum_{\ii\text{ periodic itinerary}\atop\text{ of period }n} \log \bigl|\sigma_\ii(\lambda)\bigr| + \frac{1}{n}\log \left|1-\frac{\rho_n}{\rho_\ii(\lambda)} \right|\\
&\underset{n\to +\infty}=& \frac{k_n}{d_n} \log|2\lambda| + \o\left(\frac{k_n}{d_n}\right) \sim  2\log |2\lambda| = g_M(c) + 2\log 2,
\end{eqnarray*}
which ends the proof of Lemma \ref{lemma:limitu}.

\section{Comparison of subharmonic functions}

The proof of Lemma \ref{lemma:comparisonM} relies on the following more general result. 

\begin{lemma}\label{lemma:comparisonK}
Let $K\subset \C$ be a compact set such that $\C-K$ is connected. Let $v$ be a subharmonic function on $\C$ such that $\Delta v$ is supported on $\partial K$ and does not charge the boundary of the connected components of the interior of $K$. Then, any subharmonic function 
$u$ on $\C$ which coincides with $v$ outside $K$ coincides with $v$ everywhere.
\end{lemma}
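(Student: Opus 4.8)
The plan is to reduce the statement to an equality of Riesz measures and then to exploit the hypothesis on $\Delta v$ by a balayage argument. Put $w:=u-v$. Since $\Delta v$ is carried by $\partial K$, $v$ is harmonic on $\C\setminus\partial K$; as $u=v$ on $\C\setminus K$, $u$ is in addition harmonic on $\C\setminus K$, so $\Delta u$ is a positive measure supported on the compact set $K$, in particular finite. Because $w$ vanishes on the unbounded open set $\C\setminus K$, Green's formula on a large disc shows that $\tau:=\Delta u-\Delta v$ has total mass $0$; hence its logarithmic potential $U_\tau(z):=\int\log|z-\zeta|\,d\tau(\zeta)$ tends to $0$ at infinity. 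The entire harmonic function $w-\frac1{2\pi}U_\tau$ then tends to $0$ at infinity, so it is bounded, hence $\equiv 0$, and $w=\frac1{2\pi}U_\tau$. Observe that $\tau$ restricted to ${\rm int}(K)$ equals $\Delta u|_{{\rm int}(K)}\ge 0$, since $\Delta v$ puts no mass there. The lemma is equivalent to $\tau=0$.

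I would then show that $u=v$ at quasi-every point of $\partial K$. By the Kellogg--Evans theorem the set of $\zeta\in\partial K$ at which $\C\setminus K$ is thin is polar, and every subharmonic function is finely continuous outside a polar set; hence for quasi-every $\zeta\in\partial K$ the set $\C\setminus K$ is non-thin at $\zeta$ and $u,v$ are finely continuous at $\zeta$, so the fine limits of $u$ and of $v$ at $\zeta$ can be computed through $\C\setminus K$, where $u=v$, forcing $u(\zeta)=v(\zeta)$. Since moreover $w=0$ on $\C\setminus K$ and $w=\frac1{2\pi}U_\tau$, this yields $U_{\Delta u}-U_{\Delta v}=U_\tau=2\pi w=0$ quasi-everywhere on $\C\setminus{\rm int}(K)$.

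Now the hypothesis enters. For each connected component $U$ of ${\rm int}(K)$, a bounded domain with $\partial U\subset\partial K$, let $\lambda_U$ be the balayage of $\Delta u|_U$ out of $U$: a positive measure carried by $\partial U$, with the same mass as $\Delta u|_U$, and with $U_{\lambda_U}=U_{\Delta u|_U}$ quasi-everywhere off $U$. Set $\mu':=\Delta u|_{\partial K}+\sum_U\lambda_U$, a positive measure carried by $\partial K$. Since the corrections $U_{\lambda_U}-U_{\Delta u|_U}$ vanish quasi-everywhere off ${\rm int}(K)$, the previous step gives $U_{\mu'}=U_{\Delta u}=U_{\Delta v}$ quasi-everywhere on $\C\setminus{\rm int}(K)$, a set containing the supports of both $\mu'$ and $\Delta v$; the domination principle (applied in both directions) then upgrades this to $U_{\mu'}=U_{\Delta v}$ everywhere, so $\mu'=\Delta v$. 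Restricting this equality to a component boundary $\partial U$ and using $\Delta v(\partial U)=0$, while all contributions to $\mu'(\partial U)$ are $\ge 0$, forces $\lambda_U(\partial U)=0$; but $\lambda_U$ lives on $\partial U$ and has the same mass as $\Delta u|_U$, so $\Delta u|_U=0$. As this holds for every component, $\Delta u$ is carried by $\partial K$; then every $\lambda_U=0$, so $\mu'=\Delta u$, and therefore $\Delta u=\mu'=\Delta v$. Thus $\tau=0$, $w\equiv 0$, and $u=v$.

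The main obstacle, I expect, is making the ``quasi-everywhere'' steps rigorous: the domination principle and the fine-continuity argument need $\Delta u$ and $\Delta v$ not to charge polar sets (no atoms there), and the domination principle in its standard form wants finite logarithmic energy --- one reduces to this by truncation. The fine-topology input (Kellogg--Evans plus fine continuity) is what replaces a direct comparison of the boundary behaviour of $u$ and $v$, which is not available since they agree only off $K$. Finally, the hypothesis that $\Delta v$ charge no $\partial U$ is necessary: for $K=\overline{\D}$, $v=\log^+|\cdot|$, $u=\log|\cdot|$ one has $u=v$ off $\overline{\D}$ but $u\neq v$ on $\D$, the obstruction being that $\Delta v$ is the uniform measure on $\partial\D=\partial\,{\rm int}(K)$.
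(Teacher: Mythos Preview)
Your strategy is sound and takes a genuinely different route from the paper's. After the common first step of showing $u=v$ on $\partial K$ (you argue q.e.\ via Kellogg--Evans and fine continuity; the paper invokes Ransford's limsup theorem directly), the paper avoids balayage and the domination principle entirely. It works one component $U$ of ${\rm int}(K)$ at a time and glues: set $w_1:=\max(u,v)$ on $U$ and $w_1:=v$ on $\C\setminus U$. The boundary identity makes $w_1$ upper-semicontinuous, and since $v\le w_1$ globally the submean inequality holds at each point of $\partial U$, so $w_1$ is subharmonic on all of $\C$. Integrating against a cutoff $\chi\equiv 1$ near $K$ gives $\Delta w_1(\C)=\Delta v(\C)$; since $\Delta w_1=\Delta v$ off $\overline U$ and $\Delta v(\overline U)=0$ by hypothesis, one gets $\Delta w_1=\Delta v$ on $\C$, whence $w_1-v$ is harmonic and vanishes off $U$, so $w_1\equiv v$, i.e.\ $u\le v$ on $U$. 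Doing this for every component yields $u\le v$ on $\C$; a second pass with $w_2:=u$ on $U$ and $w_2:=v$ elsewhere (now globally subharmonic because $u\le w_2$) gives $u=v$ on each $U$.

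This elementary route sidesteps exactly the obstacles you flag: no finite-energy assumption, no domination principle, and no upgrade from ``quasi-everywhere'' to ``everywhere'' beyond the trivial one for a harmonic function vanishing on an open set. Your balayage argument is more conceptual and makes transparent \emph{why} $\Delta v(\partial U)=0$ is precisely the right hypothesis, as your closing counterexample nicely illustrates; but the truncation you propose to rescue the domination principle is real work that your sketch does not supply, so as written it is a plausible outline rather than a complete proof. If you want to close it along your lines without the domination principle, note that $U_{\mu'}-U_{\Delta v}$ is harmonic on each component $U$ (both measures live on $\partial K$) and vanishes on $\C\setminus K$, with q.e.\ zero boundary data on $\partial U$; a bounded-harmonic uniqueness argument via harmonic measure then finishes, though you still owe a boundedness check.
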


\begin{proof}
According to  \cite[Theorem 3.8.3]{ransford}, for all $\zeta \in \partial K$, 
\begin{equation}
\label{eq:usc1}
u(\zeta) = \limsup_{z\in \C-K\atop z \to \zeta} u(z) =  \limsup_{z\in \C-K\atop z \to \zeta} v(z) =v(\zeta).
\end{equation}
So, $u=v$ on $\partial K$. 
The same theorem shows that, if $\zeta$ belongs to the boundary of a connected component $U$ of the interior of $K$, then 
\begin{equation}
\label{eq:usc2}
\limsup_{z\in \overline U\atop z \to \zeta} u(z) = u(\zeta)=v(\zeta) =  \limsup_{z\in \overline U\atop z \to \zeta} v(z).
\end{equation}

First, consider the function $w_1$ defined by 
\[w_1(z) = \begin{cases}\max(u,v) &\text{on } U\\ 
v &\text{on }\C-U.
\end{cases}
\]
According to Equations (\ref{eq:usc1}) and  (\ref{eq:usc2}), $w_1$ is  upper-semicontinuous. It is subharmonic on $U$ and $\C- \overline U$. At any point $\zeta\in \partial U$, it satisfies the 
local submean inequality since $w_1(\zeta)=v(\zeta)$ and $v\leq w_1$ on $\C$. Thus, $w_1$ is globally subharmonic on $\C$ and coincides with $v$ outside $U$.  
Consider a smooth test function $\chi:\C\to [0,1]$ which vanishes near $\infty$ and is constant equal to $1$ near $K$. 
On the one hand, 
\[\Delta v(\C) = \int_\C \chi\cdot \Delta v = \int_\C \Delta \chi \cdot v = \int_\C \Delta \chi \cdot w_1 = \int_\C \chi \cdot \Delta w_1 = \Delta w_1(\C).\]
On the other hand, $\Delta v = \Delta w_1$ outside $\overline U$. 
Therefore, 
\[\Delta w_1(\overline U) = \Delta w_1(\C)-\Delta w_1(\C-\overline U) =  \Delta v(\C)-\Delta v(\C-\overline U) =\Delta v(\overline U)=0.\]
So, $\Delta v = \Delta w_1$ on $\C$.
The difference $v-w_1$ is harmonic and vanishes outside $U$. Therefore $w_1=v$ on $\C$ and so, $\max(u,v)=v$ on $U$. 
It follows that $u\leq v$ on $U$. Since this holds for any connected component $U$ of the interior of $K$, we have 
$u\leq v$ on $\C$. 

Next, consider the function  $w_2$ defined by 
\[w_2(z) = \begin{cases} u &\text{on } U\\ 
v &\text{on }\C-U.
\end{cases}
\]
As previously, $w_2$ is upper-semicontinuous and subharmonic on $U$ and $\C- \overline U$. At any point $\zeta\in \partial U$, it satisfies the 
local submean inequality since $w_2(\zeta)=u(\zeta)$ and $u\leq w_2$ on $\C$. As previously, $\Delta v=\Delta w_2$ outside $\overline U$ and vanishes on $\overline U$, so that $\Delta v=\Delta w_2$ on $\C$. The function $v-w_2$ is harmonic on $\C$ and vanishes outside $\overline U$. So, $w_2=v$ on $\C$. In particular $u=v$ on $U$. 

Since this is valid for any connected component $U$ of the interior of $K$, we have $u=v$ on $\C$ as required. 
\end{proof}

Let us remark that the proof can be simplified if $v$ is continuous, which is the case in our situation.

%According to Douady \cite{douady}, the bifurcation measure of the boundary of the main cardioid of the Mandelbrot set is zero. Any other component of the interior of the Mandelbrot set is contained in a copy of the Mandelbrot set centered at a parameter $c_0$ such that $0$ is periodic of period $p\geq 2$ for $f_{c_0}$. If an external ray of angle $t$ of the Mandelbrot set accumulates on such a copy, the external ray of $K_{c_0}$ of angle $t$ accumulates on the Julia set of a quadratic-like restriction $f_{c_0}^{\circ p}:U\to V$. The set of such angles is a closed subset $T$ of $\R/\Z$ such that  each angle in $T$ has exactly $2$ preimages within $T$ by the map $t\mapsto 2^p \cdot t$. It follows that the Hausdorff dimension of $T$ is $\log (2)/\log(2^p) = 1/p$. So, the bifurcation measure of the copies of $M$ of period $p\geq 2$ is zero. 

\begin{proof}[Proof of Lemma \ref{lemma:comparisonM}]
According to Zakeri \cite{zakeri}, there is a set of parameters $c$ which is of full measure for $\mu_{\rm bif}$ such that 
\begin{itemize}
\item $J_c$ is locally connected and full, in particular $c$ is not in the boundary of a hyperbolic component of $M$ and
\item the orbit of $c$ is dense in $J_c$, in particular $c$ is not renormalizable and so, not in the boundary of a queer component of $M$. 
\end{itemize}
As a consequence, $\mu_{\rm bif}$ does not charge the boundary of connected components of the interior of $M$. 
So, we may apply Lemma \ref{lemma:comparisonK} with $K:=M$ and $v:=g_M+2\log 2$. This yields Lemma  \ref{lemma:comparisonM}.
\end{proof}


\begin{thebibliography}{BDK}
%\bibitem[BB1] {bb} {\sc G. Bassanelli} $\&$ {\sc F. Berteloot}, {Bifurcation currents in holomorphic dynamics on ${\mathbb P}^k$}, J. Reine Angew. Math. 608 (2007), 201--235.

%\bibitem[BB2] {bb2} {\sc G. Bassanelli} $\&$ {\sc F. Berteloot}, {Lyapunov exponents, bifurcation currents and laminations in bifurcation loci}, Math. Ann. 345 (2009), 1--23.

\bibitem[BB] {bbnagoya} {\sc G. Bassanelli} $\&$ {\sc F. Berteloot},
{Distribution of polynomials with cycles of a given multiplier},
Nagoya Math. J. Volume 201 (2011), 23--43.

\bibitem[BDK]{bdk} {\sc P. Blanchard}, {\sc B. Devaney} $\&$ {\sc L. Keen}, {\em The Dynamics of Complex Polynomials and Automorphisms of the Shift},  Inventiones Mathematicae 104 (1991), 545--580.

%\bibitem[B] {courseBC} {\sc F. Berteloot}, {Bifurcation currents in one-dimensional holomorphic dynamics}, Fond. CIME course, Pluripotential theory summer school, 2011, available at http://php.math.unifi.it/users/cime/Courses/2011/course.php?codice=20113.

%\bibitem[BE] {be} {\sc X. Buff} $\&$ {\sc A.L. Epstein}, {\em Bifurcation measure and postcritically finite rational maps}, In: Complex dynamics, 491-512, A K Peters, Wellesley, MA (2009). 

%\bibitem[DeM1] {dm1} {\sc L. DeMarco}, {\em Dynamics of rational maps: a current on the bifurcation locus},
%Math. Research Letters. 8 (2001), 57--66.

%\bibitem[DeM] {dm} {\sc L. DeMarco}, {\em Dynamics of rational maps: Lyapunov exponents, bifurcations, and capacity}, Math. Ann. 326, No.1 (2003), 43--73.

%\bibitem[D]{douady} {\sc A. Douady}, {\em Algorithms for Computing Angles in the Mandelbrot Set}, In Chaotic Dynamics and Fractals. Notes Rep. Math. Sci. Engrg. 2. (1986), 155--168.

%\bibitem[G1] {g} {\sc T. Gauthier}, {\em Strong bifurcation loci of full Hausdorff dimension}, 2011, to appear in Ann. Sci. \'Ecole Norm. Sup.

%\bibitem[G2] {gthese} {\sc T. Gauthier}, {\em Dimension de Hausdorff de lieux de bifurcations maximales en dynamique des fractions rationnelles}, PhD Thesis, available at the professional webpage http://www.math.univ-toulouse.fr/$\sim$gauthier.

%\bibitem[GS]{gs} {\sc J. Graczyk} $\&$ {\sc G. \'Swi\c{a}tek} {\em Asymptotically conformal similarity between Julia and Mandelbrot sets}, C. R. Acad. Sci. Paris, Ser. I 349 (2011) 309?314.

\bibitem[R]{ransford} {\sc T. Ransford}, {Potential Theory in the Complex Plane}, London Mathematical Society Student Texts (No. 28), 1995, x+232 pages.

\bibitem[Z]{zakeri} {\sc S. Zakeri}, {\em On Biaccessible Points of the Mandelbrot set}, Proc. of the A.M.S., Volume 134, Number 8 (2006)  2239--2250.


\end{thebibliography}
\end{document}